\documentclass{siamart251216}

\usepackage{lipsum}
\usepackage{amsfonts}
\usepackage{graphicx}
\usepackage{epstopdf}
\usepackage{algorithmic}
\usepackage{comment,enumitem}
\usepackage{subcaption}
\usepackage{diagbox,mathtools}
\usepackage{multirow}
\usepackage{colortbl}
\usepackage{xcolor}
\usepackage{tikz}
\newtheorem{remark}{Remark}

\ifpdf%
 \DeclareGraphicsExtensions{.eps,.pdf,.png,.jpg}
\else
 \DeclareGraphicsExtensions{.eps}
\fi
\usepackage{amsopn}

\usepackage{booktabs}

\newcommand{\bm}[1]{\mathbf{#1}}
\newcommand{\bs}[1]{\boldsymbol{#1}}

\usepackage[textsize=tiny]{todonotes}

\newcommand{\TheShortTitle}{%
 Two-Grid Framework for AA and NGMRES
}

\headers{\TheShortTitle}{J.~Adler,  Y.~He.,  X.~Hu,  S.~Lefebvre}

\title{A Unified Two-Grid Framework for Anderson Acceleration and Nonlinear GMRES\thanks{The work of Adler and Hu is partially supported by the National Science Foundation (NSF) under grant DMS-2513394.}}

\author{James H. Adler\thanks{Department of Mathematics, Tufts University, Medford, 02155 MA, USA (james.adler@tufts.edu,\, xiaozhe.hu@tufts.edu,\, Satchel.Lefebvre@tufts.edu)} \and Yunhui He\thanks{Department of Mathematics, University of Houston, 3551 Cullen Blvd, Room 641, Houston, Texas 77204-3008, USA (yhe43@central.uh.edu)}\and Xiaozhe Hu\footnotemark[2]\and Satchel Lefebvre\footnotemark[2]
}

\ifpdf%
\hypersetup{%
 pdftitle={TITLE},
 pdfauthor={J.~Adler,  Y.~He. X.~Hu,  S.~Lefebvre}
}
\fi

\begin{document}

\maketitle


\begin{abstract}
In this work, we recast two widely used acceleration methods for fixed-point iterations, Anderson acceleration (AA) and the nonlinear generalized minimal residual method (NGMRES), as two-grid methods. By explicitly deriving the error propagation matrices for AA and NGMRES on linear problems, we show that both methods, together with several of their existing variants, can be reformulated as two-grid methods with a single pre- or postsmoothing step and a coarse-grid correction given by a projection with respect to a suitable inner product. This reformulation not only unifies AA, NGMRES, and their variants within a single algorithmic framework, but also enables the design of new variants by independently adjusting the components of the two-grid method, such as the coarse space and the smoothing steps. In particular, we propose a new variant that enlarges the coarse space by incorporating the most recently updated residual, an improvement that is naturally revealed by the two-grid formulation but is hidden in the standard formulations of the AA and NGMRES algorithms. Numerical experiments on both an SPD Poisson problem and a family of non-SPD convection-diffusion problems show that this new variant often outperforms both AA and NGMRES, as well as their variants.
\end{abstract} 

\begin{keywords}
 Anderson Acceleration, nonlinear GMRES, two-grid methods, linear problems 
\end{keywords}

\begin{MSCcodes}
65F10, 65N22, 65H10
\end{MSCcodes}

\section{Introduction}\label{sec:intro}
Anderson acceleration (AA) \cite{anderson1965iterative} and the nonlinear generalized minimal residual method (NGMRES) \cite{washio1997krylov} are widely used for accelerating fixed-point iterative methods. Both approaches utilize previous iterates and solve small-dimensional optimization problems to find coefficients in order to compute the next iterate. AA constructs a linear combination of previous iterates that minimizes the norm of an associated residual, thereby accelerating the convergence of the underlying fixed-point iteration. On the other hand, NGMRES was proposed in \cite{washio1997krylov,oosterlee2000krylov} as a method that minimizes the $(k+1)$-th standard residual at each iteration to accelerate nonlinear multigrid methods.  Both AA and NGMRES have since been widely applied to a variety of problems, see \cite{washio1997krylov,oosterlee2000krylov,walker2011anderson,Lott2012flow,pollock2019anderson}.

While AA was introduced in \cite{anderson1965iterative}, theoretical study of its acceleration properties came much later with the works of \cite{Fang2009classesNL_accel, walker2011anderson}. Since then, a multitude of works have examined the convergence analysis of AA and NGMRES, e.g., \cite{toth2015convergence,evans2020proof,de2021asymptotic, He2026NGMRES_converge}. Additionally, \cite{potra2013characterization} provided a complete characterization of the behavior of AA using all the previous iterates.  The common consensus is that both AA and NGMRES bear a strong resemblance and connection to GMRES. For instance, \cite{walker2011anderson} showed that AA is equivalent to GMRES under certain conditions. Similarly, when NGMRES is applied to Richardson's iteration for linear systems, \cite{greif2026convergenceLinGMRES} showed that NGMRES is equivalent to GMRES and, based on these relationships, connected AA and NGMRES together with GMRES.

Besides studying the convergence properties of AA and NGMRES, another active area of research is the development of variants of both methods that could further improve convergence; see, for example, \cite{chen2022composite, Chen2023Initial,he2026gen_alt_AA, he2026altGMRES}. A typical approach to deriving variants is based on how the norm and the least-squares problem associated with AA and NGMRES are defined and solved. The classical AA and NGMRES use the $\ell^2$ norm for the least-squares problem. In \cite{toth2015convergence}, the authors demonstrate that other norms could be used with reformulation. Additionally, different choices of norms could be considered for different applications, see \cite{hawkins2025choice,heRebholzNGMRESNSE,he2026modified}.
Alternatively, \cite{he2026NGMRESandAAwRich} develops variants still under the $\ell^2$ norm, such as AAr, AAf, and NGMRESf, by exploring different formulations of the least-squares problem.

In this paper, inspired by the error propagation matrix derived for NGMRES in \cite{he2026altGMRES} for linear problems, we first derive an analogous error propagation matrix for AA. By noticing the close relationship between these error propagators and their counterparts in classical two-grid and multigrid methods, we are able to reformulate AA and NGMRES as two-grid methods with one postsmoothing or presmoothing step, respectively.  Ultimately, we propose a unified two-grid acceleration framework, which not only recovers most of the existing variants of AA and NGMRES, but also enables us to develop an even larger class of variants by adjusting different components such as smoothing steps and coarse-grid corrections. Moreover, under the two-grid framework, we can include the most updated residual after presmoothing into the coarse space.  This leads to a specific new variant, TGA($m$; 1, 0), in which we keep only one presmoothing step, thus making the computational cost of each iteration comparable to AA, NGMRES, and their variants.  We perform numerical experiments on the Poisson equation and convection-diffusion problems.  Numerical results show that our new variant consistently outperforms both AA,  NGMRES, and their variants when $m \geq 2$, with its advantage growing as the problem becomes increasingly convection-dominated.   

The rest of this paper is organized as follows.  We present the NGMRES and AA algorithms and their variants in \Cref{sec:AAandNGMRES}, along with a review of general two-grid methods.  Then, in \Cref{sec:AAandNGasMG}, we recast the accelerating methods as two-grid schemes.  In \Cref{sec:TGA}, we combine both methods under one general framework, and from this new formulation, we then introduce a new variant.  Numerical experiments on an SPD problem and a non-SPD problem are next presented in \Cref{sec:numRes} to illustrate the efficiency of the new approach.  Finally, conclusions and a discussion are given in \Cref{sec:conclusion}.

\section{Preliminaries}\label{sec:AAandNGMRES}
In this section, we provide a brief introduction to AA, NGMRES, and two-grid methods applied to the following linear system,
\begin{equation}\label{eqn:Ax=b}
	A\bm{x} = \bm{b},
\end{equation}
where $A \in \mathbb{R}^{n\times n}$ and $\bm{x}$, $\bm{b} \in \mathbb{R}^{n}$.  For the sake of simplicity, we assume $A$ is nonsingular.  A fixed-point iteration for solving \eqref{eqn:Ax=b} is given by
\begin{equation}\label{eqn:fixed-point-iteration}
	\bm{x}^{k+1} := g(\bm{x}^k) = \bm{x}^{k} + B(\bm{b} -A \bm{x}^{k}),
\end{equation}
where $B$ is generally considered as a preconditioner of $A$ such that $B \approx A^{-1}$.  Examples of $B$ include classical relaxation methods such as the Richardson method, $B=\omega I$ with $\omega\in \mathbb{R}$, or the Jacobi method, $B=D^{-1}$, where $D = \operatorname{diag}(A)$.  Next, we define the linear system residual as
\begin{equation}\label{eqn:linear_sysyem_residual}
	r(\bm{x}^k) = \bm{r}^{k} \coloneq \bm{b} -A\bm{x}^k,
\end{equation}
and the fixed-point iteration residual as
\begin{equation}\label{eqn:FP-residual}
	f(\bm{x}^k) = \bm{f}^k \coloneq g(\bm{x}^k) - \bm{x}^k = B\bm{r}^k.
\end{equation}
Finally, the error at the $k$th iteration is defined as
\begin{equation}
	\bm{e}^k =\bm{x} -\bm{x}^k.
\end{equation}

AA, NGMRES, and their variants use previous iterates to accelerate the fixed-point iteration \eqref{eqn:fixed-point-iteration}.  When a fixed number of previous iterates, $m$, is used, the corresponding methods are denoted as AA$(m)$ and NGMRES$(m)$, respectively. At the $k$-th step, the current window size is denoted as $m_k = \min(m,k)$.  When $m=k$ for all $k$ (i.e., $m$ is not fixed), the corresponding methods are referred to as full AA and full NGMRES.  Since one of the structural differences between AA and NGMRES lies in the least-squares problem each uses, we discuss this choice when each algorithm is presented in the following subsections.

\subsection{Nonlinear GMRES}
NGMRES accelerates a fixed-point iteration by\newline forming the next iterate as a linear combination of the current fixed-point update and a number of previous iterates, with the combination coefficients chosen to minimize  
a specific minimization problem at each iteration \cite{greif2026convergenceLinGMRES,he2026NGMRESandAAwRich}.   
The NGMRES algorithm is presented below in \cref{alg:NGMRES}.

\begin{algorithm}[H]
	\caption{NGMRES($m$): Nonlinear GMRES with depth $m$} \label{alg:NGMRES}
	\begin{algorithmic}[1]
\STATE Given $\bm{x}^0$ and fixed-point iteration $g$
\FOR{$k =0,1,...$}
\STATE $m_k = \min(m,k)$.
\STATE Update, \begin{equation}\label{eqn:NGMRES_update}
\bm{x}^{k+1} \gets g(\bm{x}^k) - \sum_{j = 1}^{m_k} \beta_j^{k} \big(\bm{x}^{k-j+1}- \bm{x}^{k-j} \big) - \beta_{m_k+1}^k \big( g(\bm{x}^k) - \bm{x}^k \big).
\end{equation}
where $\bs{\beta}^k= [\beta^k_1, \beta^k_2,...,\beta^k_{m_k+1}]^\top$ is found by solving
\begin{align}
 \min_{\bs{\beta}_k} \Bigg\| r(g(\bm{x}^k)) - \sum_{j = 1}^{m_k} \beta_j^{k} (\bm{r}^{k-j+1}- \bm{r}^{k-j} \big) - \beta_{m_k+1}^k \big( r(g(\bm{x}^k)) - \bm{r}^k \big) \Bigg\|_2^2.\tag{LSP-NG} \label{eqn:NGMRES_LSP}
\end{align}
\ENDFOR
\end{algorithmic}
\end{algorithm}

A variant of NGMRES, called NGMRESf, was proposed in \cite{he2026NGMRESandAAwRich}, and differs from NGMRES by solving the following least-squares problem instead of \eqref{eqn:NGMRES_LSP} in \cref{alg:NGMRES}:
\begin{align}
	\min_{\bs{\beta}_k} \Bigg\| f(g(\bm{x}^k)) - \sum_{j = 1}^{m_k} \beta_j^{k} \big(\bm{f}^{k-j+1}- \bm{f}^{k-j} \big) - \beta_{m_k+1}^k \big( f(g(\bm{x}^k)) - \bm{f}^k \big)\Bigg\|_2^2. \tag{LSP-NGf}\label{eqn:NGMRESf_LSP}
\end{align}
All other steps of NGMRESf are exactly the same as those in \cref{alg:NGMRES} and, for the sake of simplicity, we will not present it separately.

\subsection{Anderson Acceleration}
AA accelerates a fixed-point iteration in a similar fashion to NGMRES.  The differences are that the next iterate is a linear combination of the current fixed-point update and several previous updates, and that a different least-squares problem is solved to find the coefficients used in the linear combination.  The classical AA algorithm is presented below in \cref{alg:AA}.
 
\begin{algorithm}[H]
	\caption{Anderson Acceleration with depth $m$ } \label{alg:AA}
	\begin{algorithmic}[1]
\STATE Given $\bm{x}^0$ and fixed-point iteration $g$
\FOR{$k =0,1,...$}
\STATE Set $m_k = \min(m,k)$.
\STATE Update,
\begin{equation}\label{eqn:AA_update}
 \bm{x}^{k+1} \gets g(\bm{x}^k) - \sum_{j = 1}^{m_k} \alpha_j^{k} \big(g(\bm{x}^{k-j+1}) - g(\bm{x}^{k-j}) \big),
\end{equation}
where $\bs{\alpha}^k = [\alpha^k_1, \alpha^k_2,\ldots,\alpha^k_{m_k}]^\top$ is found by
\begin{align}
 \min_{\boldsymbol{\alpha}^{k}} \Bigg\| \bm{f}^k - \sum_{j=1}^{m_k} \alpha_j^{k} \big( \bm{f}^{k-j+1} - \bm{f}^{k-j} \big) \Bigg\|_2^2. \tag{LSP-AA}\label{eqn:AA_LSP}\nonumber
\end{align}
\ENDFOR
\end{algorithmic}
\end{algorithm}
Several variants of AA have also been proposed in \cite{he2026NGMRESandAAwRich} by solving different least-squares problems.  In particular, the AAr and AAf algorithms solve the following least-squares problems, instead of \eqref{eqn:AA_LSP} in \cref{alg:AA}:
\begin{align}
	&\min_{\boldsymbol{\alpha}^{k}}\Bigg\|r(g(\bm{x}^k))-\sum_{j=1}^{m_k}\alpha_j\Big(r\big(g(\bm{x}^{k-j+1})\big)- r\big(g(\bm{x}^{k-j})\big)\Big) \Bigg\|_2^2,
	\tag{LSP-AAr}\label{eqn:AAr_LSP}\\
	&\min_{\boldsymbol{\alpha}^{k}}\Bigg\|f(g(\bm{x}^k))-\sum_{j=1}^{m_k}\alpha_j\Big(f\big(g(\bm{x}^{k-j+1})\big)- f\big(g(\bm{x}^{k-j})\big)\Big) \Bigg\|_2^2,\tag{LSP-AAf}\label{eqn:AAf_LSP}
\end{align}
respectively.  

\begin{remark}
While not considered in this paper, it is possible to change the $\ell^2$ norm used in the least-squares problems to other norms; see \cite{toth2015convergence, he2026modified} for AA-type methods and \cite{heRebholzNGMRESNSE} for NGMRES-type methods, respectively.  We comment that our unified two-grid approach can be directly extended to those cases as well, with proper adjustment in the derivations.
\end{remark}

\subsection{Two-grid Methods}
Next, we briefly recall a standard two-grid method.  We use the fixed-point iteration \eqref{eqn:fixed-point-iteration} as the smoother/relaxation scheme and assume the coarse-grid problem is defined as $A_c = RAP$, where $R$ and $P$ are the restriction and prolongation matrices, respectively.  Based on these standard components, the classical two-grid method is summarized in \cref{alg:TG_method}.

 \begin{algorithm}[H]
	\caption{Two-grid method} \label{alg:TG_method}
	\begin{algorithmic}[1]
    \STATE Given $\bm{x}^0$, $A$, $\bm{b}$, an iterative method $g(\bm{x}^k)$, a restriction matrix $R$, and an interpolation matrix $P$.
    \FOR{$k=1,2,\ldots$}
		\STATE \textbf{Presmooth $\nu_1$ iterations}: $\bm{x}^{k+\frac{1}{3}} \gets g(\bm{x}^k)$,
		\STATE $\textbf{Coarse-grid correction}$: $\bm{x}^{k+\frac{2}{3}} \gets \bm{x}^{k+\frac{1}{3}} + PA_c^{-1}R(\bm{b} - A \bm{x}^{k+\frac{1}{3}}\big)$, 
		\STATE \textbf{Postsmooth $\nu_2$ iterations}: 
 $ \bm{x}^{k+ 1}  \gets g(\bm{x}^{k+\frac{2}{3}})$.
    \ENDFOR
    \end{algorithmic}
\end{algorithm}
It is well-known that the error of the two-grid method  \cref{alg:TG_method} is given by 
\begin{equation}\label{eqn:TG-error}
    \bm{e}^{k+1} = (I-BA)^{\nu_2}(I-\pi)(I-BA)^{\nu_1}\bm{e}^{k},
\end{equation}
where $\pi = P(RAP)^{-1}RA$ is certain projection onto $\operatorname{Range}(P)$. In practice, first a proper prolongation operator $P$ is chosen, and $R$ is then determined by the type of coarse-grid projection used.  For example, when $A$ is symmetric positive definite (SPD), $\pi$ is usually chosen to be the projection with respect to the $A$ inner product, and thus $\pi = P(P^{\top}AP)^{-1} P^{\top} A$, which implies that $R = P^{\top}$.  For general $A$, if a normal equation approach is applied, a typical choice is $\pi = P(P^{\top} A^{\top}AP)^{-1}P^{\top} A^{\top}A$, i.e., the projection with respect to the $A^{\top} A$ inner product.  In this case, we have $R = P^{\top} A^{\top}$.

Now that we have established the formulation of the standard algorithms and reviewed the basic two-grid framework, we are ready to derive a two-grid formulation of all the methods. This yields a new unified algorithmic formulation and allows us to develop different variants from a point of view other than simply changing the least-squares problem. We present our derivations in \Cref{sec:AAandNGasMG}.

\section{AA and NGMRES as Two-Grid Methods}\label{sec:AAandNGasMG} 
In this section, we reformulate AA and NGMRES as two-grid methods.  The basic idea, for the linear problem, is to explicitly write out the error propagator for AA and NGMRES and then compare it with the error equation of the two-grid method \eqref{eqn:TG-error}.  This allows us to recast AA and NGMRES as two-grid methods with special choices of the coarse-grid correction.  We start by considering NGMRES and its variants.


\subsection{NGMRES and NGMRESf}
The derivation of the error equation for NGMRES applied to a linear problem was presented in \cite{he2026altGMRES}. We  include the details here for the sake of completeness.  To this end, define the prolongation matrix as
\begin{align}
	P_k^{_{NG}} &=\begin{bmatrix}
		\bm{x}^{k-1} - \bm{x}^{k} & \dots & \bm{x}^{k-m_k} - \bm{x}^{k-m_k+1} & -B\bm{r}^k 
	\end{bmatrix}.\label{eqn:NGMRES_P}
\end{align}

\begin{lemma}\label{lemma:NGMRES}
	The error of NGMRES as presented in \cref{alg:NGMRES} is given by
	\begin{equation}\label{eqn:NGMRES_error}
		\bm{e}^{k+1}   = (I- \pi_k^{_{NG}})(I-BA)\bm{e}^k,
	\end{equation}
	where  
	$$\pi_k^{_{NG}} 
	=  P_k^{_{NG}}\big( (P_k^{_{NG}})^{\top}A^{\top} A \, P_k^{_{NG}}\big)^{-1} (P_k^{_{NG}})^{\top} A^{\top}A,
	$$
	is a projection onto the coarse space $\operatorname{Range}(P_k^{_{NG}})$, with respect to the $A^\top A$ inner product.
\end{lemma}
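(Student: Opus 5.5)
Writing $\bm{x}^{k+\frac12} := g(\bm{x}^k)$, we rewrite the NGMRES update \eqref{eqn:NGMRES_update} as a coarse-grid correction of the smoothed iterate,
\[
\bm{x}^{k+1} = \bm{x}^{k+\frac12} + P_k^{_{NG}}\bs{\beta}^k,
\]
and then identify $\bs{\beta}^k$ as the solution of a normal-equation coarse problem. The first step is immediate. The columns of $P_k^{_{NG}}$ in \eqref{eqn:NGMRES_P} are $-(\bm{x}^{k-j+1}-\bm{x}^{k-j})$ for $j=1,\dots,m_k$, and the last column is $-B\bm{r}^k = -(g(\bm{x}^k)-\bm{x}^k)$. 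Hence the sum in \eqref{eqn:NGMRES_update} is exactly $P_k^{_{NG}}\bs{\beta}^k$. Subtracting from $\bm{x}$ gives
\[
\bm{e}^{k+1} = \bm{e}^{k+\frac12} - P_k^{_{NG}}\bs{\beta}^k,
\qquad
\bm{e}^{k+\frac12} = \bm{x} - g(\bm{x}^k) = (I-BA)\bm{e}^k,
\]
where the last identity uses $\bm{b}=A\bm{x}$ in \eqref{eqn:fixed-point-iteration}.

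The second step rewrites the least-squares problem \eqref{eqn:NGMRES_LSP} in terms of errors. Since $r(\bm{y}) = A(\bm{x}-\bm{y})$ is affine, differences of residuals satisfy $\bm{r}^{k-j+1}-\bm{r}^{k-j} = -A(\bm{x}^{k-j+1}-\bm{x}^{k-j})$. Similarly, $r(g(\bm{x}^k)) - \bm{r}^k = -A\,B\bm{r}^k$ and $r(g(\bm{x}^k)) = A\bm{e}^{k+\frac12}$. The objective therefore becomes
\[
\big\|A\bm{e}^{k+\frac12} - A P_k^{_{NG}}\bs{\beta}\big\|_2^2 .
\]
This is exactly $\|\bm{e}^{k+\frac12} - P_k^{_{NG}}\bs{\beta}\|_{A^\top A}^2$.

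The third step solves this problem. Assume $P_k^{_{NG}}$ has full column rank; since $A$ is nonsingular, $AP_k^{_{NG}}$ then also has full column rank and $(P_k^{_{NG}})^\top A^\top A P_k^{_{NG}}$ is invertible. The normal equations give
\[
\bs{\beta}^k = \big((P_k^{_{NG}})^\top A^\top A P_k^{_{NG}}\big)^{-1}(P_k^{_{NG}})^\top A^\top A\,\bm{e}^{k+\frac12}.
\]
Substituting this into the error relation yields $\bm{e}^{k+1} = (I-\pi_k^{_{NG}})(I-BA)\bm{e}^k$, which is \eqref{eqn:NGMRES_error}. Finally, $\pi_k^{_{NG}}$ is idempotent, its range is $\operatorname{Range}(P_k^{_{NG}})$, and $A^\top A\,\pi_k^{_{NG}}$ is symmetric. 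It is therefore the $A^\top A$-orthogonal projection onto $\operatorname{Range}(P_k^{_{NG}})$.

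The main obstacle is the full-rank assumption. Without it, the inverse in $\pi_k^{_{NG}}$ does not exist, and $\bs{\beta}^k$ is not unique. The product $P_k^{_{NG}}\bs{\beta}^k$ is still unique, however, because it is the $A^\top A$-orthogonal projection of $\bm{e}^{k+\frac12}$ onto the range. I would therefore state full column rank as a standing assumption, as is implicit in the formula. I would also remark that, in the rank-deficient case, the inverse may be replaced by a pseudoinverse or the columns restricted to a basis without changing $\pi_k^{_{NG}}$.
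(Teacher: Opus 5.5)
Your proof is correct and follows the paper's argument essentially step for step. You write the update as $\bm{e}^{k+1} = (I-BA)\bm{e}^k - P_k^{_{NG}}\bs{\beta}^k$, rewrite the least-squares problem as $\min_{\bs{\beta}}\|A(I-BA)\bm{e}^k - AP_k^{_{NG}}\bs{\beta}\|_2^2$, solve the normal equations, and substitute; your explicit full-column-rank assumption and your check that $\pi_k^{_{NG}}$ is the $A^\top A$-orthogonal projection onto $\operatorname{Range}(P_k^{_{NG}})$ are small additions that the paper leaves implicit.
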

\begin{proof}
	Note that
	\begin{align}
		r(g(\bm{x}^k)) &=A(I-BA)\bm{e}^{k},\label{eqn:rgx}\\
		r(g(\bm{x}^k))- r(\bm{x}^k) &= ABA\bm{x}^k - AB\bm{b} = -AB\bm{r}^k.\label{eqn:rgx-rx}
	\end{align}
	Using \eqref{eqn:rgx} and \eqref{eqn:rgx-rx}, we rewrite \eqref{eqn:NGMRES_LSP} as
	\begin{equation}\label{eqn:NGMRES_LSP_simplified}
		\min_{\bs{\beta}_k}\left\|A(I-BA)\bm{e}^{k} - AP_k^{_{NG}}\bs{\beta}^k\right\|_2^2.
	\end{equation}
	The solution of \eqref{eqn:NGMRES_LSP_simplified} is then
	\begin{equation}
		\bs{\beta}^k = ((P_k^{_{NG}})^{\top} A^{\top}AP_k^{_{NG}})^{-1} (P_k^{_{NG}})^{\top} A^{\top}A(I-BA)\bm{e}^{k}.\label{eqn:NGMRES_solved_beta}
	\end{equation}
	On the other hand, based on \eqref{eqn:NGMRES_update}, the error of NGMRES is given by
	\begin{align}
		\bm{e}^{k+1} &=  \bm{x} - \Big(g(\bm{x}^k) - \sum_{j = 1}^{m_k} \beta_j^{k} \big(\bm{x}^{k-j+1}- \bm{x}^{k-j} \big) - \beta_{m_k+1}^k \big( g(\bm{x}^k) - \bm{x}^k \big)\Big)\nonumber\\
		&=(I-BA)\bm{e}^k -P_k^{_{NG}}\bs{\beta}^k .\label{eqn:NGMRES_general_error}
	\end{align}
	Substituting \eqref{eqn:NGMRES_solved_beta} back into \eqref{eqn:NGMRES_general_error}, we obtain
	\begin{align}
		\bm{e}^{k+1}&= (I-BA)\bm{e}^k -P_k^{_{NG}}((P_k^{_{NG}})^{\top} A^{\top}AP_k^{_{NG}})^{-1}(P_k^{_{NG}})^{\top} A^{\top}A(I-BA)\bm{e}^{k}\nonumber\\
		&=(I- P_k^{_{NG}}((P_k^{_{NG}})^{\top} A^{\top}AP_k^{_{NG}})^{-1}(P_k^{_{NG}})^{\top} A^{\top}A)(I-BA)\bm{e}^k 
		\\ &=(I- \pi_k^{_{NG}})(I-BA)\bm{e}^k,
	\end{align}
	where $\pi_k^{_{NG}} = P_k^{_{NG}}((P_k^{_{NG}})^{\top} A^{\top}AP_k^{_{NG}})^{-1}(P_k^{_{NG}})^{\top} A^{\top}A$. This completes the proof. 
\end{proof}

Next, we present the analogous lemma for NGMRESf proposed in \cite{he2026NGMRESandAAwRich} .
\begin{lemma}\label{lemma:NGMRESf}
	The error of NGMRESf, i.e., \cref{alg:NGMRES} using
	\eqref{eqn:NGMRESf_LSP}, can be written as
	\begin{equation} \label{eqn:NGMRESf_error}
		\bm{e}^{k+1}   = (I- \pi_k^{_{NGf}})(I-BA)\bm{e}^k, 
	\end{equation}
	where \begin{align*}
		\pi_k^{_{NGf}} 
		&= P_k^{_{NG}}( (P_k^{_{NG}})^{\top} \, (A^{\top}
		B^{\top}B A)  \, P_k^{_{NG}})^{-1} (P_k^{_{NG}})^{\top} \, (A^{\top} B^{\top}BA), 
	\end{align*}
	is a projection onto the coarse space $\operatorname{Range}(P_k^{_{NG}})$ with respect to the $A^{\top} B^{\top} B A$ inner product.
\end{lemma}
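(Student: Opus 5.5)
The argument mirrors the proof of \cref{lemma:NGMRES}. Only the least-squares problem changes; the update \eqref{eqn:NGMRES_update} is identical. So the error identity \eqref{eqn:NGMRES_general_error}, $\bm{e}^{k+1}=(I-BA)\bm{e}^k-P_k^{_{NG}}\bs{\beta}^k$, carries over verbatim. It remains to show that the minimizer of \eqref{eqn:NGMRESf_LSP} is the $A^\top B^\top BA$-projection coefficient vector.

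First we rewrite each vector in \eqref{eqn:NGMRESf_LSP} in terms of errors. Since $f(\bm{y})=Br(\bm{y})$ for any $\bm{y}$ by \eqref{eqn:FP-residual}, we can left-multiply \eqref{eqn:rgx} and \eqref{eqn:rgx-rx} by $B$. This gives
\begin{equation*}
f(g(\bm{x}^k))=BA(I-BA)\bm{e}^k, \qquad f(g(\bm{x}^k))-\bm{f}^k=-BAB\bm{r}^k .
\end{equation*}
The history differences satisfy $\bm{f}^{k-j+1}-\bm{f}^{k-j}=BA(\bm{x}^{k-j}-\bm{x}^{k-j+1})$. Hence every difference vector in \eqref{eqn:NGMRESf_LSP} equals $BA$ times the corresponding column of $P_k^{_{NG}}$ in \eqref{eqn:NGMRES_P}. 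The sign and ordering conventions match exactly those used for \eqref{eqn:NGMRES_LSP_simplified}. The least-squares problem therefore becomes
\begin{equation*}
\min_{\bs{\beta}^k}\big\|BA(I-BA)\bm{e}^k-BAP_k^{_{NG}}\bs{\beta}^k\big\|_2^2 .
\end{equation*}

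Next we solve this problem. Assuming $BAP_k^{_{NG}}$ has full column rank, the normal equations give
\begin{equation*}
\bs{\beta}^k=\big((P_k^{_{NG}})^\top A^\top B^\top BA P_k^{_{NG}}\big)^{-1}(P_k^{_{NG}})^\top A^\top B^\top BA(I-BA)\bm{e}^k .
\end{equation*}
Full column rank of $BAP_k^{_{NG}}$ follows from $P_k^{_{NG}}$ having full column rank, provided $B$ is nonsingular. This is the same implicit assumption under which \cref{lemma:NGMRES} inverts $(P_k^{_{NG}})^\top A^\top A P_k^{_{NG}}$. Substituting $\bs{\beta}^k$ into the error identity and factoring out $(I-BA)\bm{e}^k$ yields \eqref{eqn:NGMRESf_error}.

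Finally, we verify that $\pi_k^{_{NGf}}$ is the $A^\top B^\top BA$-orthogonal projection onto $\operatorname{Range}(P_k^{_{NG}})$. Idempotence follows by direct multiplication. Self-adjointness holds because $M=A^\top B^\top BA$ is symmetric: $M\pi_k^{_{NGf}}$ is symmetric. When $A$ and $B$ are nonsingular, $M$ is SPD and so defines a genuine inner product.

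The only step needing care is the column-by-column identification of the \eqref{eqn:NGMRESf_LSP} vectors with $BAP_k^{_{NG}}$, particularly the signs. The invertibility assumption on $B$ should be stated explicitly.
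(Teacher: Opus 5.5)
Your proposal is correct and takes essentially the same approach as the paper. You rewrite \eqref{eqn:NGMRESf_LSP} as $\min\|BA(I-BA)\bm{e}^k - BAP_k^{_{NG}}\bs{\beta}^k\|_2^2$ via $f = Br$, solve the normal equations, and substitute into the unchanged error identity \eqref{eqn:NGMRES_general_error}; your column-by-column sign check, the projection verification, and the explicit requirement that $B$ be nonsingular (so that $A^\top B^\top BA$ is SPD) are details the paper leaves implicit.
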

\begin{proof}
	The proof is essentially the same as the proof of \cref{lemma:NGMRES}.  The main difference is that, since NGMRESf solves \eqref{eqn:NGMRESf_LSP}, which can be rewritten as
	\begin{equation*} 
		\min_{\bs{\beta}_k}\left\| BA(I-BA)\bm{e}^{k} - BA P_k^{_{NG}}\bs{\beta}^k\right\|_2^2,
	\end{equation*}
	based on \eqref{eqn:FP-residual} and \eqref{eqn:NGMRES_LSP_simplified}, \eqref{eqn:NGMRESf_error} follows directly using the same derivation as in \cref{lemma:NGMRES}.
\end{proof}

Comparing \eqref{eqn:NGMRES_error} and \eqref{eqn:NGMRESf_error} with \eqref{eqn:TG-error}, we immediately see the similarity.  In particular, if we set $\nu_1 = 1$ and $\nu_2 =0$, and choose $P = P_k^{_{NG}}$ and $R = (P_k^{_{NG}})^{\top} A^{\top} $ in the two-grid method, \cref{alg:TG_method}, we recover NGMRES.  In addition, NGMRESf is essentially a two-grid method with the choice $\nu_1 = 1$, $\nu_2 =0$, $P = P_k^{_{NG}}$, and $R = (P_k^{_{NG}})^{\top} A^{\top} B^{\top} B$.  In summary, both methods can be reformulated as two-grid methods with only one presmoothing step and a special coarse-grid correction in which the choice of the coarse space changes from iteration to iteration.  Thus, we combine the two-grid formulation of NGMRES and NGMRESf into a general algorithm presented in \cref{alg:NGMRES_as_2grid}.
\begin{algorithm}[H]
	\caption{General NGMRES($m$) as a two-grid method} \label{alg:NGMRES_as_2grid}
	\begin{algorithmic}[1]
  \FOR{$k=0,1,2,\ldots$}
        \STATE $m_k  \gets \min(k,m)$, $P_k \gets P_k^{_{NG}}$, and set
        $$
        R_k =  
        \begin{cases}
        (P_k^{_{NG}})^{\top} A^{\top}, \quad  \text{(NGMRES)} \\
        (P_k^{_{NG}})^{\top} A^{\top} B^{\top} B, \quad \text{(NGMRESf)}
        \end{cases}
        $$
		\STATE\textbf{Presmoothing}: $ \bm{x}^{k+1/2} \gets \bm{x}^{k} +B(\bm{b} - A \bm{x}^{k})$,
		\STATE \textbf{Coarse-grid correction}: 
        $$\bm{x}^{k+1} \gets \bm{x}^{k+1/2} + P_k (R_kA P_k)^{-1} R_k (\bm{b} - A \bm{x}^{k+1/2}).
        $$
\ENDFOR
	\end{algorithmic}
\end{algorithm}


\subsection{Anderson Acceleration and Variants}
As with NGMRES and NGMRESf, error equations for AA, AAr, and AAf can be derived in a similar fashion.  These error representations of AA and its variants also enable us to connect them with the two-grid method, \cref{alg:TG_method}, using different choices of smoothing steps and coarse-grid spaces.  We first define
\begin{align}
P_k^{_{AA}}&= \begin{bmatrix}
		\bm{x}^{k-1} - \bm{x}^{k} & \ldots &\bm{x}^{k-m_k} - \bm{x}^{k-m_k+1}\end{bmatrix}.\label{eqn:AA_Prolongation}  
\end{align}
We start by presenting the error lemma for the classical AA method.
\begin{lemma}\label{lemma:AA}
	The error of AA as in \cref{alg:AA} is 
	\begin{equation}\label{eqn:AA_TG_error}
		\bm{e}^{k+1} = (I -BA)(I-\pi_k^{_{AA}})\bm{e}^{k},
	\end{equation}
	where 
	\begin{align*}
		\pi_k^{_{AA}}
		&= P_k^{_{AA}}( (P_k^{_{AA}})^{\top} A^{\top}
		B^{\top}B A \, P_k^{_{AA}})^{-1} (P_k^{_{AA}})^{\top} A^{\top} B^{\top}BA, 
	\end{align*}
	is a projection onto the coarse space $\operatorname{Range}(P_k^{_{AA}})$ with respect to the $A^{\top} B^{\top} B A$ inner product.
\end{lemma}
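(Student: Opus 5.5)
We follow the template of the proof of \cref{lemma:NGMRES}: rewrite the least-squares problem \eqref{eqn:AA_LSP} in terms of the current error and the prolongation $P_k^{_{AA}}$, solve it in closed form, and substitute the solution into the error recursion coming from the update \eqref{eqn:AA_update}. The difference from NGMRES is that the coarse-grid correction is applied to $\bm{e}^k$ before the smoothing step. It therefore appears to the right of $(I-BA)$, producing a postsmoothing rather than a presmoothing structure.

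First, we express the data of the least-squares problem. Since $\bm{r}^j = A\bm{e}^j$, we have $\bm{f}^j = B\bm{r}^j = BA\bm{e}^j$. Because the exact solution $\bm{x}$ cancels in differences, $\bm{e}^{j}-\bm{e}^{j-1} = -(\bm{x}^{j}-\bm{x}^{j-1}) = \bm{x}^{j-1}-\bm{x}^{j}$, and hence
\begin{equation*}
\bm{f}^{k-j+1}-\bm{f}^{k-j} = BA(\bm{e}^{k-j+1}-\bm{e}^{k-j}) = BA(\bm{x}^{k-j}-\bm{x}^{k-j+1}) .
\end{equation*}
This is $-BA$ times the $j$-th column of $P_k^{_{AA}}$ in \eqref{eqn:AA_Prolongation}. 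The sign is the one thing to track carefully. We absorb it by writing the objective as $\|BA\bm{e}^k + BAP_k^{_{AA}}\bs{\alpha}^k\|_2^2$, or equivalently by setting $\bs{\gamma} = -\bs{\alpha}^k$ to get $\|BA\bm{e}^k - BAP_k^{_{AA}}\bs{\gamma}\|_2^2$. Assuming $BAP_k^{_{AA}}$ has full column rank (implicit in the statement, since the inverse is written), the normal equations give
\begin{equation*}
\bs{\gamma} = \big((P_k^{_{AA}})^{\top}A^{\top}B^{\top}BAP_k^{_{AA}}\big)^{-1}(P_k^{_{AA}})^{\top}A^{\top}B^{\top}BA\,\bm{e}^k .
\end{equation*}

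Next, we compute the error from \eqref{eqn:AA_update}. We have $\bm{x}-g(\bm{x}^j) = (I-BA)\bm{e}^j$, and by linearity $g(\bm{x}^{k-j+1})-g(\bm{x}^{k-j}) = (I-BA)(\bm{x}^{k-j+1}-\bm{x}^{k-j})$, which is $-(I-BA)$ times the $j$-th column of $P_k^{_{AA}}$. Hence
\begin{equation*}
\bm{e}^{k+1} = (I-BA)\bm{e}^k + \sum_{j}\alpha_j^k (I-BA)(\bm{x}^{k-j+1}-\bm{x}^{k-j}) = (I-BA)\big(\bm{e}^k - P_k^{_{AA}}\bs{\gamma}\big).
\end{equation*}
Substituting $\bs{\gamma}$ gives $\bm{e}^{k+1} = (I-BA)(I-\pi_k^{_{AA}})\bm{e}^k$, which is \eqref{eqn:AA_TG_error}.

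Finally, $\pi_k^{_{AA}}$ is idempotent, and its range is $\operatorname{Range}(P_k^{_{AA}})$. It is self-adjoint in the (semi-)inner product induced by $A^{\top}B^{\top}BA$, which is a genuine inner product when $B$ is nonsingular. These are the same routine checks as in \cref{lemma:NGMRES}. The main obstacle is purely bookkeeping: the sign conventions relating $\bs{\alpha}^k$ to the columns of $P_k^{_{AA}}$ must be consistent in both the least-squares problem and the update. The key structural point is that $(I-BA)$ factors out on the left, because the AA update combines $g$-values rather than iterates.
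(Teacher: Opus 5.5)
Your plan is the paper's plan: rewrite the AA least-squares problem using $\bm{f}^j = BA\bm{e}^j$, solve the normal equations, and substitute into the error form of \eqref{eqn:AA_update}. However, the sign bookkeeping you single out as the main obstacle is wrong in two places, and your final formula comes out right only because the two errors cancel.

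First, you correctly derive $\bm{f}^{k-j+1}-\bm{f}^{k-j} = BA(\bm{x}^{k-j}-\bm{x}^{k-j+1})$. But $\bm{x}^{k-j}-\bm{x}^{k-j+1}$ \emph{is} the $j$-th column of $P_k^{_{AA}}$ (for $j=1$ it is $\bm{x}^{k-1}-\bm{x}^{k}$). So this difference is $+BA$ times that column, not $-BA$ times it. The objective is therefore $\|BA\bm{e}^k - BAP_k^{_{AA}}\bs{\alpha}^k\|_2^2$, as in \eqref{eqn:AA_LSP_simplified}. It is $\bs{\alpha}^k$ itself, not $-\bs{\alpha}^k$, that is given by the normal-equations formula you wrote for $\bs{\gamma}$.

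Second, in the update you correctly observe that $(I-BA)(\bm{x}^{k-j+1}-\bm{x}^{k-j})$ is $-(I-BA)$ times the $j$-th column. Hence $\sum_j \alpha_j^k (I-BA)(\bm{x}^{k-j+1}-\bm{x}^{k-j}) = -(I-BA)P_k^{_{AA}}\bs{\alpha}^k$, and $\bm{e}^{k+1} = (I-BA)(\bm{e}^k - P_k^{_{AA}}\bs{\alpha}^k)$. With $\bs{\gamma}=-\bs{\alpha}^k$ this reads $(I-BA)(\bm{e}^k + P_k^{_{AA}}\bs{\gamma})$, not $(I-BA)(\bm{e}^k - P_k^{_{AA}}\bs{\gamma})$ as you wrote.

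So both displayed intermediate claims are false as stated, even though combining them yields \eqref{eqn:AA_TG_error}. The repair is to drop $\bs{\gamma}$ altogether. No sign flip occurs anywhere, and using $\bs{\alpha}^k$ directly turns your argument into the paper's proof. Your remarks on full column rank of $BAP_k^{_{AA}}$, and on $A^{\top}B^{\top}BA$ defining an inner product when $B$ is nonsingular, are correct. They make explicit what the paper leaves implicit.
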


\begin{proof}
	Since $\bm{r}^k = A\bm{e}^k$,  we rewrite \eqref{eqn:AA_LSP} as
	\begin{equation}\label{eqn:AA_LSP_simplified}
		\min_{\boldsymbol{\alpha}^{k}}\left\|BA\bm{e}^{k} - BA P_k^{_{AA}}\bs{\alpha}^k\right\|_2^2,
	\end{equation}
	whose solution is
	\begin{equation}\label{eqn:AA_solved_alpha}
		\bs{\alpha}^k = ((P_k^{_{AA}})^{\top} A^{\top}B^{\top}BAP_k^{_{AA}})^{-1}(P_k^{_{AA}})^{\top}A^{\top}B^{\top}BA\bm{e}^{k}.
	\end{equation}
	Based on \cref{alg:AA}, the AA update step \eqref{eqn:AA_update} gives 
	\begin{equation}\label{eqn:AA_general_error}
		\bm{e}^{k+1}  = (I-BA)\big(\bm{e}^k - P_k^{_{AA}}\bs{\alpha}^k\big).
	\end{equation}
	Substituting \eqref{eqn:AA_solved_alpha} into \eqref{eqn:AA_general_error} gives
	\begin{align}
		\bm{e}^{k+1} &= (I-BA)\Big(\bm{e}^k - P_k^{_{AA}}( (P_k^{_{AA}})^{\top}A^{\top}B^{\top}BAP_k^{_{AA}})^{-1} (P_k^{_{AA}})^{\top}A^{\top}B^{\top}BA\bm{e}^{k}\Big)\nonumber\\
		&=(I-BA)\big(I - \pi_k^{_{AA}}\big)\bm{e}^k.
	\end{align}
This completes the proof.
\end{proof}

Similarly, we can derive the error equation for AAr as well, which is summarized in the following lemma.
\begin{lemma}\label{lemma:AAr}
	The error of AAr, i.e., \cref{alg:AA} using \eqref{eqn:AAr_LSP}, is given by
	\begin{equation}\label{eqn:AAr_TG_error}
		\bm{e}^{k+1} = (I -BA)(I-\pi_k^{_{AAr}})\bm{e}^{k},
	\end{equation}
	where, letting $M := I-AB$,
	\begin{align*}
		\pi_k^{_{AAr}} 
		&= P_k^{_{AA}}\big((P_k^{_{AA}})^{\top} A M^{\top} M A \, P_k^{_{AA}}\big)^{-1} (P_k^{_{AA}})^{\top} A^{\top} M^{\top} MA,
	\end{align*}
	is a projection onto the coarse space $\operatorname{Range}(P_k^{_{AA}})$ with respect to the $A^{\top}M^{\top} M A$ inner product.
\end{lemma}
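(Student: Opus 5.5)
The plan mirrors the proof of \cref{lemma:AA}. The update step \eqref{eqn:AA_update} is unchanged for AAr, so \eqref{eqn:AA_general_error} still holds, namely $\bm{e}^{k+1} = (I-BA)(\bm{e}^k - P_k^{_{AA}}\bs{\alpha}^k)$. Only the formula for $\bs{\alpha}^k$ changes. The whole task is therefore to rewrite \eqref{eqn:AAr_LSP} in the form $\min_{\bs\alpha}\|MA\bm{e}^k - MAP_k^{_{AA}}\bs\alpha\|_2^2$ and then read off the normal-equation solution.

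The key identity I would verify first is that for any $\bm{y}$,
\[
r(g(\bm{y})) = \bm{b} - A\bm{y} - AB(\bm{b}-A\bm{y}) = (I-AB)\,r(\bm{y}) = M A(\bm{x}-\bm{y}).
\]
Applying this with $\bm{y}=\bm{x}^k$ gives $r(g(\bm{x}^k)) = MA\bm{e}^k$. Applying it to consecutive iterates gives
\[
r(g(\bm{x}^{k-j+1})) - r(g(\bm{x}^{k-j})) = MA(\bm{e}^{k-j+1}-\bm{e}^{k-j}) = MA(\bm{x}^{k-j}-\bm{x}^{k-j+1}),
\]
which is $MA$ times the $j$-th column of $P_k^{_{AA}}$ in \eqref{eqn:AA_Prolongation}. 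This sign bookkeeping is exactly the same as in \cref{lemma:AA}. So \eqref{eqn:AAr_LSP} becomes $\min_{\bs\alpha}\|MA\bm{e}^k - MAP_k^{_{AA}}\bs\alpha\|_2^2$.

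Assuming $MAP_k^{_{AA}}$ has full column rank, which is the same implicit assumption as in the earlier lemmas, the minimizer is
\[
\bs\alpha^k = \big((P_k^{_{AA}})^\top A^\top M^\top M A P_k^{_{AA}}\big)^{-1}(P_k^{_{AA}})^\top A^\top M^\top MA\,\bm{e}^k.
\]
Substituting this into \eqref{eqn:AA_general_error} yields $\bm{e}^{k+1}=(I-BA)(I-\pi_k^{_{AAr}})\bm{e}^k$ with $\pi_k^{_{AAr}}$ as stated. Here I take the transpose written as $A$ in the statement's first factor to mean $A^\top$. The operator is idempotent with range $\operatorname{Range}(P_k^{_{AA}})$, and it is self-adjoint in the $A^\top M^\top MA$ inner product. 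This is a genuine inner product because $M A = A - ABA$ is nonsingular exactly when $I-BA$ is. That nonsingularity is not automatic, but invertibility of the Gram matrix, which is the rank assumption above, is all the argument needs.

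The only step needing care is confirming the identity $r(g(\bm{y}))=(I-AB)r(\bm{y})$ and the resulting sign convention for the differences. Everything else is a verbatim repeat of \cref{lemma:AA}.
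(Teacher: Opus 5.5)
Your proposal is correct and takes the same route as the paper. You rewrite \eqref{eqn:AAr_LSP} as $\min_{\bs{\alpha}}\|MA\bm{e}^k - MAP_k^{_{AA}}\bs{\alpha}\|_2^2$ using $r(g(\bm{y})) = (I-AB)\,r(\bm{y})$, then reuse the normal-equation solution and the error identity from \cref{lemma:AA}. You also spell out the identity and the sign bookkeeping that the paper leaves implicit, and you are right that the first factor $A$ in the Gram matrix of $\pi_k^{_{AAr}}$ should read $A^{\top}$.
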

\begin{proof}
	Note that \eqref{eqn:AAr_LSP} can be rewritten as
	\begin{equation*}
		\min_{\boldsymbol{\alpha}^{k}}\left\|M A\bm{e}^{k} - M A P_k^{_{AA}}\bs{\alpha}^k\right\|_2^2.
	\end{equation*}
	Then, we follow the exact same procedure as in the proof of \cref{lemma:AA} to obtain \eqref{eqn:AAr_TG_error}.
\end{proof}

Finally, we consider AAf and summarize its error equation in the following lemma
\begin{lemma}\label{lemma:AAf}
	The error of AAf, i.e., \cref{alg:AA} using \eqref{eqn:AAf_LSP}, is given by
	\begin{equation}\label{eqn:AAf_TG_error}
		\bm{e}^{k+1} = (I -BA)(I-\pi_k^{_{AAf}})\bm{e}^{k},
	\end{equation}
	where, letting $M := B(I-AB)$,
	\begin{align*}
		\pi_k^{_{AAf}} 
		&= P_k^{_{AA}}\big((P_k^{_{AA}})^{\top} A M^{\top} M A \, P_k^{_{AA}}\big)^{-1} (P_k^{_{AA}})^{\top} A^{\top} M^{\top} MA,
	\end{align*}
	is a projection onto the coarse space $\operatorname{Range}(P_k^{_{AA}})$ with respect to the $A^{\top}M^{\top} M A$ inner product.
\end{lemma}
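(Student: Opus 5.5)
The plan follows the proof of \cref{lemma:AA} and \cref{lemma:AAr}: first rewrite the least-squares problem \eqref{eqn:AAf_LSP} as a weighted residual minimization of the form $\min_{\bs{\alpha}^k}\|MA\bm{e}^k - MAP_k^{_{AA}}\bs{\alpha}^k\|_2^2$ with $M = B(I-AB)$. Then solve the normal equations and substitute into the error of the update \eqref{eqn:AA_update}, which is unchanged from AA.

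\textbf{Step 1: identify the residual operator.} For any $\bm{y}$ we have $f(g(\bm{y})) = B\,r(g(\bm{y}))$. From \eqref{eqn:rgx}, applied at a general point, $r(g(\bm{y})) = A(I-BA)(\bm{x}-\bm{y}) = (I-AB)A(\bm{x}-\bm{y})$. The second form uses the identity $A(I-BA) = (I-AB)A$. Hence
\begin{equation*}
f(g(\bm{y})) = B(I-AB)A(\bm{x}-\bm{y}) = MA(\bm{x}-\bm{y}).
\end{equation*}
In particular, $f(g(\bm{x}^k)) = MA\bm{e}^k$ and $f(g(\bm{x}^{k-j+1})) - f(g(\bm{x}^{k-j})) = MA(\bm{x}^{k-j}-\bm{x}^{k-j+1})$. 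The latter is $MA$ applied to the $j$-th column of $P_k^{_{AA}}$ in \eqref{eqn:AA_Prolongation}. Therefore \eqref{eqn:AAf_LSP} becomes
\begin{equation*}
\min_{\bs{\alpha}^k}\left\|MA\bm{e}^k - MAP_k^{_{AA}}\bs{\alpha}^k\right\|_2^2 .
\end{equation*}

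\textbf{Step 2: solve and substitute.} Assume $MAP_k^{_{AA}}$ has full column rank, which is the standing assumption implicit in the earlier lemmas. The solution is then
\begin{equation*}
\bs{\alpha}^k = \big((P_k^{_{AA}})^\top A^\top M^\top MAP_k^{_{AA}}\big)^{-1}(P_k^{_{AA}})^\top A^\top M^\top MA\,\bm{e}^k .
\end{equation*}
The update step is identical to that of AA, so \eqref{eqn:AA_general_error} still holds:
\begin{equation*}
\bm{e}^{k+1} = (I-BA)\big(\bm{e}^k - P_k^{_{AA}}\bs{\alpha}^k\big).
\end{equation*}
Substituting $\bs{\alpha}^k$ gives \eqref{eqn:AAf_TG_error}.

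\textbf{Step 3: projection property.} Direct multiplication gives $(\pi_k^{_{AAf}})^2 = \pi_k^{_{AAf}}$, and the range of $\pi_k^{_{AAf}}$ is $\operatorname{Range}(P_k^{_{AA}})$. Moreover, $A^\top M^\top MA\,\pi_k^{_{AAf}}$ is symmetric, so $\pi_k^{_{AAf}}$ is orthogonal with respect to the $A^\top M^\top MA$ (semi-)inner product.

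\textbf{Main obstacle.} The only real step is Step 1, namely recognizing that differences of $f\circ g$ at consecutive iterates are $MA$ applied to the columns of $P_k^{_{AA}}$. Care is needed with signs: the difference $f(g(\bm{x}^{k-j+1}))-f(g(\bm{x}^{k-j}))$ maps to the column $\bm{x}^{k-j}-\bm{x}^{k-j+1}$. One must also check that the commutation $A(I-BA)=(I-AB)A$ is what allows $M$ to be written as $B(I-AB)$ acting after $A$. Note also that the statement's displayed inverse contains $A M^\top M A$; I would read this as $A^\top M^\top MA$, consistent with the rest of the formula.
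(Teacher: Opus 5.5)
Your proposal is correct and takes the same approach as the paper. The paper's proof simply rewrites \eqref{eqn:AAf_LSP} as $\min_{\bs{\alpha}^k}\|MA\bm{e}^k - MAP_k^{_{AA}}\bs{\alpha}^k\|_2^2$ with $M = B(I-AB)$ and then repeats the argument of \cref{lemma:AA}; your Step 1 supplies exactly the computation the paper leaves implicit, and you are right to read the $AM^\top MA$ inside the inverse as $A^\top M^\top MA$, since that is a typo in the statement.
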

\begin{proof}
	The proof is the same as before by noting that \eqref{eqn:AAf_LSP} can be rewritten as
	$	\min_{\boldsymbol{\alpha}^{k}}\left\|M A\bm{e}^{k} - M AP_k^{_{AA}}\bs{\alpha}^k\right\|_2^2$ with $M := B(I-AB)$.
\end{proof}

Again, the above results reveal that AA, AAr, and AAf can all be expressed as two-grid methods with a coarse-grid correction followed by a postsmoothing step.  They differ only in the coarse-grid correction step, i.e., they use different projections onto $\operatorname{Range}(P)$, where $P = P_{k}^{_{AA}}$, with respect to different inner products.  This leads to different choices of the restriction $R$.  In particular, the restrictions are $(P_k^{AA})^{\top} A^{\top} B^{\top} B$, $(P_k^{AA})^{\top} A^{\top} (I-AB)^{\top} (I-AB)$, and $(P_k^{AA})^{\top} A^{\top} (I-AB)^{\top} B^{\top} B (I-AB)$ for AA, AAr, and AAf, respectively.  We present the following general AA algorithm, covering AA, AAr, and AAf, in \cref{alg:AA_as_2grid}.
\begin{algorithm}[H]
	\caption{General AA($m$) as a two-grid method (at $k$-th iteration)} \label{alg:AA_as_2grid}
	\begin{algorithmic}[1]
  \FOR{k=1,2,\ldots}
  \STATE $m_k \gets \min(k,m)$,  $P_k \gets P_k^{_{AA}}$, and 
  $$
  R_k \gets 
  \begin{cases}
  	(P_k^{AA})^{\top} A^{\top}B^{\top}B,  \quad  \text{(AA)} \\
  	(P_k^{AA})^{\top} A^{\top} (I-AB)^{\top}(I-AB), \quad \text{(AAr)} \\
  	(P_k^{AA})^{\top} A^{\top}(I-AB)^{\top}B^{\top}B(I-AB), \quad \text{(AAf)} \\
  \end{cases}.
  $$
\STATE $\textbf{Coarse-grid\ correction}$: $\bm{x}^{k+1/2} \gets \bm{x}^k + P_k(R_kAP_k)^{-1} R_k (\bm{b} - A \bm{x}^k)$,
\STATE $\textbf{Postsmoothing}$: $ \bm{x}^{k+1} \gets \bm{x}^{k+1/2} + B(\bm{b} - A \bm{x}^{k+1/2})$.
\ENDFOR
\end{algorithmic}
\end{algorithm}

\section{A General Two-Grid Acceleration Method}\label{sec:TGA}
AA, NGMRES, and all the presented variants can be expressed in terms of a smoothing step, either pre- or postsmoothing, and a coarse-grid correction.  We summarize the results for all the variants in \cref{tab:Choice_of_P_and_R}.

\begin{table}[h!]
    \centering
    \footnotesize
    \begin{tabular}{|c|c|c|c|}
\midrule
\textbf{Method} & \multicolumn{3}{c|}{$P_k$}  \\
\midrule
\midrule
NGMRES \& NGMRESf & \multicolumn{3}{c|}{$\begin{bmatrix}
        \bm{x}^{k-1} - \bm{x}^{k} & \dots & \bm{x}^{k-m_k} - \bm{x}^{k-m_k+1} & -B\bm{r}^k
    \end{bmatrix}$} \\
\midrule
AA, AAr, \& AAf & \multicolumn{3}{c|}{$\begin{bmatrix}
        \bm{x}^{k-1} - \bm{x}^{k} & \dots & \bm{x}^{k-m_k} - \bm{x}^{k-m_k+1}
    \end{bmatrix}$}\\
\midrule
\midrule
& Error propagation & \multicolumn{2}{c|}{$\pi_k$ is a projection onto $\operatorname{Range}(P_k)$ w.r.t. }\\
\midrule
\midrule
NGMRES &$(I-\pi_k)(I-BA)$&
 \multicolumn{2}{c|}{$A^{\top}A$ inner product} \\
 \midrule
NGMRESf &$(I-\pi_k)(I-BA)$&
 \multicolumn{2}{c|}{$A^{\top}B^{\top}BA$ inner product} \\
\midrule
AA & $(I-BA)(I-\pi_k)$& \multicolumn{2}{c|}{$A^{\top}B^{\top}BA$ inner product} \\
\midrule
AAr& $(I-BA)(I-\pi_k)$& \multicolumn{2}{c|}{$ A^{\top}(I-AB)^{\top}(I-AB)A$ inner product } \\
\midrule
AAf& $(I-BA)(I-\pi_k)$& \multicolumn{2}{c|}{$A^{\top}(I-AB)^{\top}B^{\top}B(I-AB)A$ inner product} \\
\midrule
    \end{tabular}
    \caption{Summary of choices of two-grid components for general AA and NGMRES as two-grid algorithms.}    
    \label{tab:Choice_of_P_and_R}
\end{table}

The above results and observations from \Cref{tab:Choice_of_P_and_R}
 motivate a natural question: \emph{what other choices of smoothing and coarse-grid correction could be used to accelerate the fixed-point iteration \eqref{eqn:fixed-point-iteration}?} Addressing this question, we propose a unified two-grid acceleration framework, which not only recovers AA, NGMRES, and their existing variants as special cases, but also enables us to derive new variants from a different point of view, such as by adjusting the smoothing steps or by modifying the coarse space defined by $\operatorname{Range}(P)$.
This new formulation is presented in \cref{alg:TGA(m)}.
\begin{algorithm}[H]
	\caption{Two-grid Accelerator: TGA${(m;\nu_1,\nu_2)}$} 
	\label{alg:TGA(m)}
	\begin{algorithmic}[1]
		\STATE Given $\bm{x}^0$
		\FOR{$k=1,2,\ldots$}
		\STATE \textbf{Presmooth $\nu_1$ times}: $\bm{x}^{k+\frac{1}{3}} \gets \bm{x}^{k} + B\big(\bm{b} - A \bm{x}^{k}\big)$,
		\STATE \textbf{Compute the residual:} $\bm{r}^{k+\frac{1}{3}} \gets \bm{b} - A \bm{x}^{k + \frac{1}{3}}$,
		\STATE $m_k \gets \min(m,k)$; choose $P_k$ and then construct $R_k$ based on the choice of projection onto the coarse space $\operatorname{Range}(P_k)$,
		\STATE \textbf{Coarse-grid correction}: $\bm{x}^{k+\frac{2}{3}} \gets \bm{x}^{k+\frac{1}{3}} + P_k (R_k A P_k)^{-1} R_k \bm{r}^{k+\frac{1}{3}}$, 
		\STATE \textbf{Postsmooth $\nu_2$ times}: 
		$ \bm{x}^{k+ 1}  \gets \bm{x}^{k+\frac{2}{3}} + B\big(\bm{b} - A \bm{x}^{k+\frac{2}{3}}\big)$.
		\ENDFOR
	\end{algorithmic}
\end{algorithm}

Again, we see that different choices of $P_k$ lead to NGMRES and AA, while the choice of smoothing (pre- versus postsmoothing) distinguishes the two methods as well.  Their respective variants, although originally proposed from the point of view of different least-squares problems, can equivalently be understood within our framework as arising from different choices of coarse-grid correction, namely, projections onto the same coarse space $\operatorname{Range}(P_k)$ but with respect to different inner products.  This reinterpretation is practically useful.  Rather than requiring a new least-squares problem to be formulated for each variant, one only needs to specify the inner product defining the projection $\pi_k$, and the corresponding restriction $R_k$ follows automatically.  This observation is precisely what allows us to unify AA, NGMRES, and all of their variants within the single algorithmic framework of \cref{alg:TGA(m)}.

The main advantage of our two-grid framework is that it provides the flexibility to propose new variants based on multigrid methodology, rather than being restricted to reformulating the underlying least-squares problem.  In particular, we can easily develop new variants by independently adjusting different components of the two-grid algorithm, such as the choice of coarse space, the projection (and hence the inner product), as well as the number and placement of smoothing steps.  In this work, we mainly focus on one such variant, with a specific choice of the prolongation matrix $P_k$ and pre- and postsmoothing steps.  We comment that there are other possible adjustments one can explore, such as multilevel and nonlinear extensions, which are the subject of ongoing work.

\subsection{A Specific Variant}\label{sec:TGAvariant}
To construct our special variant, we first discuss the choice of the prolongation matrix. This is an important component of the two-grid algorithm, since the effectiveness of a two-grid method relies on how well the coarse space captures the low-frequency (or difficult-to-converge) error components.  Comparing the prolongation matrices $P_{k}^{AA}$ and $P_k^{NG}$ given in \cref{tab:Choice_of_P_and_R}, one can see that $P_k^{NG}$ has one extra column, $-B\bm{r}^k$, which partially explains the good convergence behavior of NGMRES.  Under our two-grid framework, this extra column may capture more information about the low-frequency error components that are difficult to converge, since $B\bm{r}^k \approx A^{-1}\bm{r}^k = \bm{e}^k$.  Thus, for our two-grid accelerator, we construct $P_k$ in a similar fashion, including such an extra column.  However, if a presmoothing step is used, the most recently updated residual is $\bm{r}^{k+\frac{1}{3}}$, not $\bm{r}^k$.  We emphasize that this is hidden in the original NGMRES algorithm \cref{alg:NGMRES} and is only revealed by our two-grid formulation \cref{alg:TGA(m)}.  We therefore propose to include $B\bm{r}^{k+\frac{1}{3}}$ in the construction.  To keep the dimension of the coarse space the same, we also drop the oldest iterate and construct the following prolongation:
\begin{align} 
	& \quad P_k = P_k^{_{TGA}}  \nonumber \\
	& \label{def:P_TGA} := 
	\begin{cases}
		[-B \bm{r}^k, -B \bm{r}^{k+\frac{1}{3}}], & \text{if } \nu_1 > 0, \ m =1, \\
		[\bm{x}^{k-1} - \bm{x}^k, \cdots, \bm{x}^{k - m_k+1}- \bm{x}^{k - m_k+2}, -B \bm{r}^k, -B\bm{r}^{k+\frac{1}{3}}], & \text{if } \nu_1 > 0,  \ m \geq 2, \\
		[\bm{x}^{k-1} - \bm{x}^k, \cdots, \bm{x}^{k - m_k}- \bm{x}^{k - m_k+1}, -B \bm{r}^k], & \text{if } \nu_1 = 0.
	\end{cases}
\end{align}

Besides the choice of $P_k$, we can also vary the smoothing steps.  That is, while the original AA and NGMRES algorithms perform only one step of post- or presmoothing, respectively, our new two-grid formulation allows us to adjust the number and placement of smoothing steps as needed.  This provides a class of variants of NGMRES and AA.  However, in order to make our two-grid accelerator computationally comparable with NGMRES, AA, and their variants, we mainly consider the simple setting $\nu_1 = 1$ and $\nu_2 = 0$ in our numerical experiments.  Note that this choice matches NGMRES and allows us to use $P_{k}^{_{TGA}}$, as defined in \eqref{def:P_TGA}.

To summarize, although we can develop many different variants based on our two-grid framework, we mainly consider variants that use the prolongation $P_k^{_{TGA}}$.  For simplicity, the projection $\pi_k$ is defined with respect to the $A^{\top}A$ inner product, which gives $R_k = (P_k^{_{TGA}})^{\top} A^{\top}$.  We use TGA$(m;\nu_1,\nu_2)$ to denote our general two-grid accelerator, and mainly focus on the case $\nu_1 = 1$ and $\nu_2=0$ for a fair comparison.


\section{Numerical Results}\label{sec:numRes} 
In this section, we present numerical results to verify the effectiveness of TGA$(m;\nu_1,\nu_2)$ as an accelerating framework.  We first consider the 2D Poisson problem to verify that \Cref{alg:NGMRES_as_2grid} and \Cref{alg:AA_as_2grid} are equivalent to NGMRES and AA, respectively.  We then test the two-grid accelerator discussed in \Cref{sec:TGA} on this 2D Poisson problem, as well as on a convection-diffusion equation.  All methods were coded in MATLAB R2025b, and a direct solver was used whenever matrix inversion was needed.


\subsection{SPD Example: Poisson's Equation} \label{sec:SPD-example}
We consider the 2D Poisson equation, $-\Delta \bm{u} = f$, defined on the unit square $[0,1] \times [0,1]$ with zero Dirichlet boundary conditions.  The problem is discretized using linear finite elements on a uniform triangular mesh with $N+2$ points in each direction, giving a grid spacing of $h = 1/(N+1)$.  After eliminating boundary conditions, this yields a matrix that is $n\times n$, where $n=N^2$.  In our experiments, we consider a mesh of size $16 \times 16$ ($h=1/17$), corresponding to a matrix of size $256\times 256$, and a mesh of size $32 \times 32$ ($h=1/33$), corresponding to a matrix of size $1024\times 1024$.  We fix the right-hand side to be a vector of ones, $\bm{b} = \bm{1}$, and assume $B= \omega I$ with $\omega = 1/||A||_1 = 1/8$ so that $\rho(I-BA) <1$. The initial guess $\bm{x}^0$ is randomly generated via MATLAB's \texttt{rand} function, and the stopping tolerance is a relative reduction in residual of $10^{-10}$, i.e. $\|\bm{r}^k\|/\|\bm{r}^0\| \leq 10^{-10}$.

We first demonstrate the equivalence of the TGA formulations in \cref{alg:AA_as_2grid} (referred to as TGA$(m;0,1)$-AA) and \cref{alg:NGMRES_as_2grid} (referred to as TGA$(m;1,0)$-NG) with the standard AA and NGMRES formulations, respectively.  Here we use the $P_k$ listed in \Cref{tab:Choice_of_P_and_R} so that the correspondingTGA formulations corresponds to the standard AA and NGMRES.

\begin{figure}[H]
\centering
\includegraphics[width = .7\textwidth]{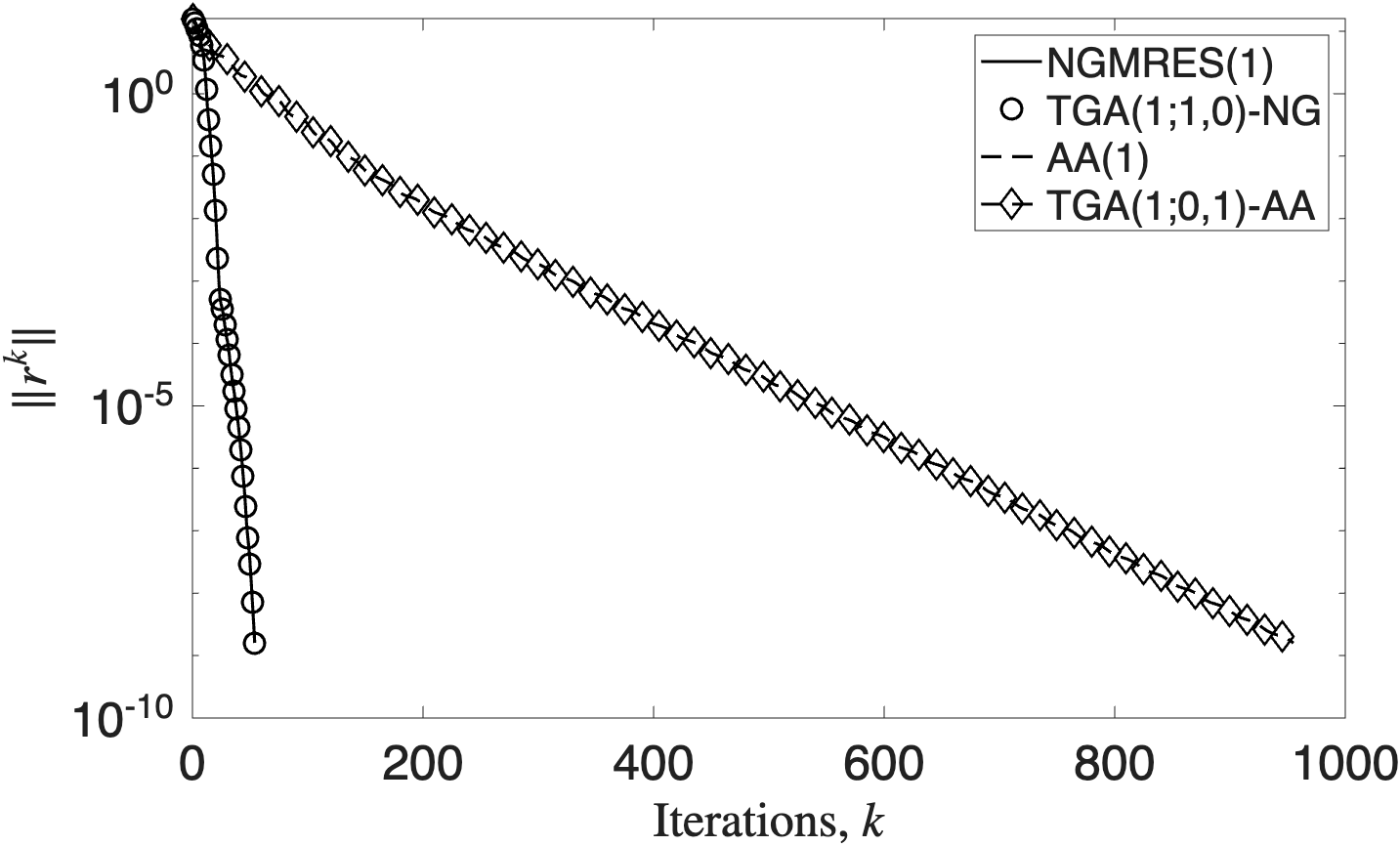}
\caption{\small Plots of $\|\bm{r}^k\|_2$ versus iterations to verify the two-grid reformulations of AA(1) and NGMRES(1) on the 2D Poisson with matrix size $256 \times 256$.}
\label{fig:TGA-equivalence}
\end{figure}

From \cref{fig:TGA-equivalence}, we observe the expected equivalences: TGA$(1;1,0)$-NG is identical to standard NGMRES$(1)$, and TGA$(1;0,1)$-AA and AA$(1)$ are equivalent as well.  We also observe that NGMRES$(1)$ converges faster than AA$(1)$, which is expected.  This faster convergence occurs because NGMRES$(1)$ produces iterates identical to GMRES when the matrix $A$ is SPD, as shown in \cite{greif2026convergenceLinGMRES}.

We proceed to explore the specific variant TGA$(m;1,0)$, which uses the prolongation \eqref{def:P_TGA} developed in \Cref{sec:TGAvariant}, and compare it with NGMRES, AA, and their variants.  We use a Jacobi smoother throughout, i.e., $B = D^{-1}$ with $D = \operatorname{diag}(A)$.  We test using two different matrix sizes, $n=256$ and $n=1024$, and report the number of iterations for $m=1,2,5,10$ in \cref{tab:exp2_avg_its}.

\begin{table}[H]
	\centering
	\begin{tabular}{cc}
	\begin{tabular}{|c|c|c|}
		\hline \hline 
		& $n =256$ & $n =1024$\\
		\hline
		AA(1)& 1240  & 4696\\
		\hline
		AAr(1)& 1058 & 4215 \\
		\hline
		AAf(1)& 1058 & 4215 \\
		\hline
	    NGMRES(1)& 56 & 108 \\
		\hline
		NGMRESf(1) & 56 & 108 \\
		\hline
		TGA(1;1,0)& 314 & 1063 \\
		\hline\hline
			\end{tabular}
			\begin{tabular}{|c|c|c|}
				\hline \hline
		& $n =256$ & $n =1024$\\
		\hline
		AA(2)& 729 & 2843 \\
		\hline
		AAr(2)& 638  & 2433\\
		\hline
		AAf(2)& 638 & 2433 \\
		\hline
		NGMRES(2)& 56 & 108 \\
		\hline
		NGMRESf(2) & 56 & 108 \\
		\hline
		TGA(2;1,0)& 41  & 175 \\
		\hline\hline
	\end{tabular}
	\\[1em]
	\begin{tabular}{|c|c|c|}
		\hline \hline 
		& $n =256$ & $n =1024$\\
		\hline
		AA(5)& 267  & 1239\\
		\hline
		AAr(5)& 258 & 1251 \\
		\hline
		AAf(5)& 258 & 1251 \\
		\hline
		NGMRES(5)& 56 & 108 \\
		\hline
		NGMRESf(5) & 56 & 108 \\
		\hline
		TGA(5;1,0)& 41 & 79 \\
		\hline\hline
	\end{tabular}
	\begin{tabular}{|c|c|c|}
		\hline \hline
		& $n =256$ & $n =1024$\\
		\hline
		AA(10)& 148  & 416\\
		\hline
		AAr(10)& 147  & 394\\
		\hline
		AAf(10)& 147 & 394 \\
		\hline
		NGMRES(10)& 56 & 108 \\
		\hline
		NGMRESf(10) & 56 & 108 \\
		\hline
		TGA(10;1,0)& 36 & 64 \\
		\hline\hline
	\end{tabular}
\end{tabular}
	\caption{ \small Comparison of iterations needed to achieve $\|\bm{r}^k\|_2/\|\bm{r}^0\|_2 < 10^{-10}$ for TGA$(m; 1,0)$ against AA, NGMRES, and their variants on 2D Poisson problem.}
	\label{tab:exp2_avg_its}
\end{table}

From \Cref{tab:exp2_avg_its}, we see that AAr$(m)$ and AAf$(m)$ require the same number of iterations for each value of $m$.  This is because, in our setting, $B = D^{-1} = \frac{1}{4}I$, which is simply a scaling of the identity matrix; therefore, the AA variants are equivalent.  For the same reason, NGMRES$(m)$ and NGMRESf$(m)$ also require the same number of iterations for each value of $m$.  However, since as discussed above NGMRES$(m)$ is equivalent to GMRES for the SPD problem for any $m \geq 1$, the number of iterations stays the same for all window sizes; the same holds for NGMRESf$(m)$, since it is equivalent to NGMRES$(m)$ in our setting.  In contrast, the iteration counts for AA, AAr, and AAf decrease as $m$ increases, though they remain far larger than those of NGMRES for all tested values of $m$. 

On the other hand, our new variant, TGA$(m;1,0)$, outperforms all the other methods when $m\geq 2$ and also benefits from increasing $m$.  It even achieves better performance than NGMRES$(m)$ (and NGMRESf$(m)$) in this example, because the two-grid formulation \cref{alg:TGA(m)} allows us to bring in the most recently updated information through $B\bm{r}^{k+\frac{1}{3}}$, which provides a richer coarse space without increasing the dimension for projecting out the low-frequency error components in the multilevel framework.  We also note that, when $m=1$, TGA$(1;1,0)$ requires more iterations to converge than NGMRES$(1)$ (and NGMRESf$(1)$).  This is expected, since the equivalence between NGMRES$(1)$ and GMRES requires a coarse space spanned by at least $\bm{x}^{k-1} - \bm{x}^k$ and $B\bm{r}^k$, whereas TGA$(1;1,0)$ uses a coarse space spanned by $\bm{x}^{k-1}-\bm{x}^k$ and $B\bm{r}^{k+\frac{1}{3}}$.  To summarize, our new variant TGA$(m;1,0)$ is effective when $m \geq 2$ for this SPD example, compared to NGMRES, AA, and their variants. 

\subsection{Non-SPD Example: the Convection-Diffusion Equation}
Next we consider the two dimensional convection-diffusion problem defined as 
\begin{equation*}
   -\eta \Delta u  + \bm{b} \cdot \nabla u = f, \quad  \text{in }  \Omega = [0,1] \times [0,1],
    \label{eqn:convection_diff}
\end{equation*}
with $\bm{b} = (0,1)^{\top}$  and homogeneous Dirichlet boundary conditions. Here $\eta$ denotes the diffusion coefficient and determines whether the problem is convection or diffusion dominated. A small value of $\eta \ll 1$ corresponds to a convection-dominated problem.  We use the same uniform triangular mesh as the SPD example in \Cref{sec:SPD-example} and employ the usual upwind scheme.  Similarly,  in our experiments, we consider a mesh of size $h=1/17$, corresponding to a matrix of size $256\times 256$, and a mesh of size $h=1/33$, corresponding to a matrix of size $1024\times 1024$.  We still use $\bm{b} = \bm{1}$ as the right-hand side, a random initial guess $\bm{x}^0$, and the stopping tolerance is still $\|\bm{r}^k\|/\|\bm{r}^0\| \leq 10^{-10}$.  We again use a Jacobi smoother here,  i.e., $B = D^{-1}$ with $D = \operatorname{diag}(A)$, and report the number of iterations for $\eta = 1$, $10^{-3}$, and $10^{-6}$ in \cref{tab:exp_non_SPD_eta_1}, \cref{tab:exp_non_SPD_eta_1e-3}, and \cref{tab:exp_non_SPD_eta_1e-6}, respectively.

\begin{table}[H]
	\centering
	\begin{tabular}{cc}
		\begin{tabular}{|c|c|c|}
			\hline \hline 
			& $n =256$ & $n =1024$\\
			\hline
			AA(1)& 1096  & 3871 \\
			\hline
			AAr(1)& 1099 & 3867 \\
			\hline
			AAf(1)& 1099 & 3867 \\
			\hline
			NGMRES(1)& 125 & 263 \\
			\hline
			NGMRESf(1) & 125 & 263 \\
			\hline
			TGA(1;1,0)& 279 &  966 \\
			\hline\hline
		\end{tabular}
		\begin{tabular}{|c|c|c|}
			\hline \hline
			& $n =256$ & $n =1024$\\
			\hline
			AA(2)& 635 & 2843 \\
			\hline
			AAr(2)& 575  & 2433\\
			\hline
			AAf(2)& 575 & 2433 \\
			\hline
			NGMRES(2)& 116 & 108 \\
			\hline
			NGMRESf(2) & 116 & 108 \\
			\hline
			TGA(2;1,0)& 59  & 120 \\
			\hline\hline
		\end{tabular}
		\\[1em]
		\begin{tabular}{|c|c|c|}
			\hline \hline 
			& $n =256$ & $n =1024$\\
			\hline
			AA(5)& 281  & 1210\\
			\hline
			AAr(5)& 278 & 1202 \\
			\hline
			AAf(5)& 278 & 1202\\
			\hline
			NGMRES(5)& 97 & 208 \\
			\hline
			NGMRESf(5) & 97 & 208 \\
			\hline
			TGA(5;1,0)& 44 & 98 \\
			\hline\hline
		\end{tabular}
		\begin{tabular}{|c|c|c|}
			\hline \hline
			& $n =256$ & $n =1024$\\
			\hline
			AA(10)& 128  & 409 \\
			\hline
			AAr(10)& 121  & 426 \\
			\hline
			AAf(10)& 121 & 426 \\
			\hline
			NGMRES(10)& 84 & 181 \\
			\hline
			NGMRESf(10) & 84 & 181 \\
			\hline
			TGA(10;1,0)& 41 & 78 \\
			\hline\hline
		\end{tabular}
	\end{tabular}
	\caption{ \small $\eta = 1$}
	\label{tab:exp_non_SPD_eta_1}
\end{table}

When $\eta = 1$, we are in the diffusion-dominated regime.  Thus, from \cref{tab:exp_non_SPD_eta_1}, we observe behavior similar to the SPD example.  First, AAr and AAf are equivalent, and NGMRES and NGMRESf are equivalent.  This is again because $B$ is a scaling of the identity matrix for this example as well.  Second, NGMRES and NGMRESf are, in general, more effective than AA and its variants.  Meanwhile, our new variant TGA outperforms all the other methods when $m \geq 2$, demonstrating the benefit of including the most recently updated residual $\bm{r}^{k+\frac{1}{3}}$ in the coarse space.  Finally, we see that TGA$(1;1,0)$ still requires more iterations than NGMRES and NGMRESf in this diffusion-dominated regime.

\begin{table}[H]
	\centering
	\begin{tabular}{cc}
		\begin{tabular}{|c|c|c|}
			\hline \hline 
			& $n =256$ & $n =1024$\\
			\hline
			AA(1)&  37  & 74\\
			\hline
			AAr(1)& 38 & 75 \\
			\hline
			AAf(1)& 38 & 75 \\
			\hline
			NGMRES(1)& 56 & 95 \\
			\hline
			NGMRESf(1) & 56 & 95 \\
			\hline
			TGA(1;1,0)& 29 & 47 \\
			\hline\hline
		\end{tabular}
		\begin{tabular}{|c|c|c|}
			\hline \hline
			& $n =256$ & $n =1024$\\
			\hline
			AA(2)& 41 & 80 \\
			\hline
			AAr(2)& 41  & 80\\
			\hline
			AAf(2)& 41 & 80 \\
			\hline
			NGMRES(2)& 61 & 104 \\
			\hline
			NGMRESf(2) & 61 & 104 \\
			\hline
			TGA(2;1,0)& 40  & 52 \\
			\hline\hline
		\end{tabular}
		\\[1em]
		\begin{tabular}{|c|c|c|}
			\hline \hline 
			& $n =256$ & $n =1024$\\
			\hline
			AA(5)& 55  & 99\\
			\hline
			AAr(5)& 54 & 100 \\
			\hline
			AAf(5)& 54 & 100 \\
			\hline
			NGMRES(5)& 75 & 132 \\
			\hline
			NGMRESf(5) & 75 & 132 \\
			\hline
			TGA(5;1,0)& 43 & 85 \\
			\hline\hline
		\end{tabular}
		\begin{tabular}{|c|c|c|}
			\hline \hline
			& $n =256$ & $n =1024$\\
			\hline
			AA(10)& 53  & 120\\
			\hline
			AAr(10)& 48  & 114\\
			\hline
			AAf(10)& 48 & 114 \\
			\hline
			NGMRES(10)& 83 & 160 \\
			\hline
			NGMRESf(10) & 83 & 160 \\
			\hline
			TGA(10;1,0)& 36 & 86 \\
			\hline\hline
		\end{tabular}
	\end{tabular}
	\caption{ \small $\eta = 10^{-3}$}
	\label{tab:exp_non_SPD_eta_1e-3}
\end{table}

For $\eta = 10^{-3}$, the problem is moderately convection-dominated, and the results in \cref{tab:exp_non_SPD_eta_1e-3} look noticeably different from the diffusion-dominated case.  First, unlike the SPD and diffusion-dominated cases, AA and its variants now converge faster than NGMRES and NGMRESf across all window sizes $m$.  Second, TGA$(m;1,0)$ remains superior in all cases. It achieves the best iteration count among all methods at every $m$, and even at $m=1$ it already outperforms AA, NGMRES, and their variants.  This indicates that, in this more convection-dominated regime, including the most recently updated residual $\bm{r}^{k+\frac{1}{3}}$ is even more effective than in the diffusion-dominated case.  Finally, for all the acceleration methods, the iteration count grows with $m$, indicating that, in this more convection-dominated regime, increasing the window size does not uniformly improve the accelerators' performance.

\begin{table}[H]
	\centering
	\begin{tabular}{cc}
		\begin{tabular}{|c|c|c|}
			\hline \hline 
			& $n =256$ & $n =1024$\\
			\hline
			AA(1)& 30  & 57\\
			\hline
			AAr(1)& 29 & 55 \\
			\hline
			AAf(1)& 29 & 55 \\
			\hline
			NGMRES(1)& 60 & 101 \\
			\hline
			NGMRESf(1) & 60 & 101 \\
			\hline
			TGA(1;1,0)& 28 & 49 \\
			\hline\hline
		\end{tabular}
		\begin{tabular}{|c|c|c|}
			\hline \hline
			& $n =256$ & $n =1024$\\
			\hline
			AA(2)& 38 & 74 \\
			\hline
			AAr(2)& 38  & 72\\
			\hline
			AAf(2)& 38 & 72 \\
			\hline
			NGMRES(2)& 77 & 112 \\
			\hline
			NGMRESf(2) & 77 & 112 \\
			\hline
			TGA(2;1,0)& 35 & 58 \\
			\hline\hline
		\end{tabular}
		\\[1em]
		\begin{tabular}{|c|c|c|}
			\hline \hline 
			& $n =256$ & $n =1024$\\
			\hline
			AA(5)& 59  & 94\\
			\hline
			AAr(5)& 54 & 96 \\
			\hline
			AAf(5)& 54 & 96 \\
			\hline
			NGMRES(5)& 88 & 138 \\
			\hline
			NGMRESf(5) & 88 & 138 \\
			\hline
			TGA(5;1,0)& 50 & 77 \\
			\hline\hline
		\end{tabular}
		\begin{tabular}{|c|c|c|}
			\hline \hline
			& $n =256$ & $n =1024$\\
			\hline
			AA(10)& 54  & 125 \\
			\hline
			AAr(10)& 42  & 118 \\
			\hline
			AAf(10)& 42 & 118 \\
			\hline
			NGMRES(10)& 84 & 165 \\
			\hline
			NGMRESf(10) & 84 & 165 \\
			\hline
			TGA(10;1,0)& 36 & 94 \\
			\hline\hline
		\end{tabular}
	\end{tabular}
	\caption{ \small $\eta = 10^{-6}$}
	\label{tab:exp_non_SPD_eta_1e-6}
\end{table}

For $\eta = 10^{-6}$, the problem is strongly convection-dominated, and the results in \cref{tab:exp_non_SPD_eta_1e-6} exhibit the same qualitative trends observed for $\eta = 10^{-3}$ in \cref{tab:exp_non_SPD_eta_1e-3}, but more pronounced.  NGMRES and NGMRESf now perform even worse relative to AA and its variants. TGA$(m;1,0)$ again achieves the best performance among all methods at every $m$ and every problem size tested, and its advantage over NGMRES widens as $\eta$ decreases, underscoring the benefit of incorporating the most recently updated residual $\bm{r}^{k+\frac{1}{3}}$ as the problem becomes increasingly convection-dominated.  As in the $\eta=10^{-3}$ case, however, the iteration counts for all methods, including TGA$(m;1,0)$, still grow with $m$ rather than decrease, confirming that a larger window size is not universally beneficial for convection-dominated problems.

\section{Conclusion}\label{sec:conclusion}
In this paper, we established a new connection between Anderson acceleration, NGMRES, and classical two-grid methods.  By explicitly deriving the error propagation matrices for AA and NGMRES on linear problems, we showed that both methods, along with their existing variants (AAr, AAf, and NGMRESf), can be reformulated as two-grid methods with a single presmoothing or postsmoothing step and a coarse-grid correction defined by a projection with respect to a suitable inner product.  This unified two-grid perspective not only recovers these existing methods as special cases but also reveals that their differences can be understood purely in terms of the choice of coarse space and the inner product defining the coarse-grid projection, rather than through separate least-squares formulations.

Building on this reformulation, we proposed a general two-grid accelerator that provides a flexible framework for constructing new acceleration methods.  In particular, we introduce a new variant that augments the coarse space with the most recently updated residual information, made explicit through the two-grid reformulation, but hidden in the standard formulation of the NGMRES and AA algorithms.  Numerical experiments on both an SPD Poisson problem and a family of non-SPD convection-diffusion problems demonstrated that this new variant often outperforms both AA and NGMRES and their variants, with its advantage growing as the problem becomes increasingly convection-dominated.

The two-grid perspective developed here opens several directions for future work.  In particular, the multilevel and nonlinear extensions of the TGA framework are natural next steps and are the subject of ongoing work.  We also expect that the flexibility of the two-grid framework to accommodate different smoothing strategies and coarse-space constructions will enable further new variants of AA and NGMRES tailored to specific problem classes.

\bibliographystyle{siamplain}
\bibliography{references}

\end{document}